\documentclass[a4paper,12pt]{amsart}
\usepackage{amssymb}
\usepackage{amsmath, amsthm, amscd, amsfonts, amssymb, graphicx, color}
\usepackage{amsmath, amsthm, amscd, amsfonts, amssymb, graphicx, color}

\usepackage[cp1250]{inputenc}
\usepackage[T1]{fontenc}

\newtheorem{theorem}{Theorem}[section]
\newtheorem{lemma}[theorem]{Lemma}

\theoremstyle{definition}

\theoremstyle{remark}
\newtheorem{remark}[theorem]{Remark}
\numberwithin{equation}{section}

\begin{document}

	\vspace{15pt}
	
	\title{Periodicity in Banach algebras}
	\author{Stefan Ivkovi\' c}
	\address{Mathematical Institute of the Serbian Academy of Sciences and Arts, Kneza Mihaila 36, Beograd
		11000, Serbia}
	\email{stefan.iv10@outlook.com}
	\maketitle
	
		\begin{abstract}
		In this paper, we consider operators that are compositions of an isometric isomorphism and a left multiplier on a Banach algebra, and we provide necessary and sufficient conditions for these operators to have dense set of periodic elements. As an application of this result, we characterize generalized weighted shifts with a dense set of periodic elements on the standard Hilbert module over C*-algebra of compact operators on a separable Hilbert space. As another application,  we characterize generalized weighted shifts with a dense set of periodic elements on the standard Hilbert module over commutative non-unital C*-algebra. 
	\end{abstract}
	
	\textbf{Keywords:} \keywords{Periodic element, generalized weighted shift operator, Hilbert module, Banach algebra}
	
	\vspace{15pt}
	
	\begin{flushleft}
		\textbf{Mathematics Subject Classification (2010)} Primary MSC 47A16, Secondary MSC 54H20.
	\end{flushleft}
	
	\

	\section{Introduction and preliminaries}
	
	 Throughout the paper, we let $ \mathcal{A}$ be a non-unital Banach algebra which is a left ideal in a unital Banach algebra $\mathcal A_{1} .$ We will assume that there exists a constant $ K> 0$ such that  $$ \| ba \| \leq K \| b \|_1 \| a \|, $$for all $ a \in \mathcal{A}$ and $ b \in \mathcal{A}_1 ,$ where $ \| \cdot \| $ and $ \| \cdot \|_1 $ denote the norms on $\mathcal{A} $ and $\mathcal{A}_1 ,$ respectively. We do not assume that the norm on $\mathcal{A} $ extends to the norm on  $\mathcal{A}_1 .$ This condition will be called the \textit{condition (E)} throughout the paper.
	
	Let $\{p_{\alpha}\}_{\alpha}$ be a set in $\mathcal A.$	We will assume that $\{p_{\alpha}\}_{\alpha}$
	has \textit{right-approximation property} in $\mathcal{A},$ that is for every open non-empty subset $\mathcal{O}$ of
	$\mathcal{A}$ and $u \in \mathcal{O}$ there exists some $\alpha$ such
	that
	$
	u p_{\alpha} \in \mathcal{O}.
	$ Also, we will assume that for each $a \in  \mathcal{A}$ and every index $ \alpha $ it holds that
	$
	\|ap_{\alpha}\|\leq \|a\|
	.$ This condition will be called the \textit{condition (L)} throughout the paper.

	Next, we will assume that $\Phi$ is an isometric algebra isomorphism of $\mathcal{A}_1 $ (in the norm $ \| \cdot \|_1 $) such that the restriction of $\Phi$ to $\mathcal{A} $ is an isometric isomorphism of $\mathcal{A} $ in the norm $ \| \cdot \| .$  We will say that $\Phi$ is \textit{strongly aperiodic} if
	for every $\alpha$ there exists some $N_{\alpha}\in\mathbb N$
	such that for all $a\in\mathcal A$ and every $l\geq N_{\alpha}$,
	the series
	\[
	\sum_{n=1}^{\infty}
	a\Phi^{nl}(p_{\alpha})
	\qquad\text{and}\qquad
	\sum_{n=1}^{\infty}
	a\Phi^{-nl}(p_{\alpha})
	\]
	are convergent in $\mathcal A$, and in addition
	\[
	\lim_{k\to\infty}
	\sum_{n=1}^{\infty}
	a\Phi^{nl k}(p_{\alpha})
	=
	\lim_{k\to\infty}
	\sum_{n=1}^{\infty}
	a\Phi^{-nl k}(p_{\alpha})
	=
	0.
	\]

	Let $b$ be an invertible element in $\mathcal A_{1}$. We define the operator $T$ on $\mathcal A$ by $$ T(a)= b \Phi(a)$$ for all $a \in \mathcal A.$ Due to the condition (E), the operator $T$ is bounded linear operator. Moreover, since $ \Phi $ is an isometric isomorphism and $b$ is invertible, it follows that the operator $T$ is invertible. We set $ S:=T^{-1} .$ By some calculations it can be checked that
	\[
	T^{n}(a)
	=
	b\,
	\Phi(b)
	\dots
	\Phi^{\,n-1}(b)\,
	\Phi^{n}(a),
	\]
	and
	\[
	S^{n}(a)
	=
	\Phi^{-1}(b^{-1})
	\dots
	\Phi^{-n}(b^{-1})
	\Phi^{-n}(a),
	\]
	for all $a\in\mathcal{A}$.\\
	Disjoint Furstenberg semi-transitivity of such operators has been studied in \cite{AOFA}. 
	
	\text{ }
	
	Let $\beta$ be a subset of
	$\mathcal A_{1}\cap G(\mathcal A_{1})$,
	where $G(\mathcal A_{1})$ is the set of all invertible
	elements of $\mathcal A_{1}$. For every $\widetilde{b}\in\beta$, we let
	$\widetilde{b}T$ be the operator on $\mathcal A$
	defined by
	
	\[
	(\widetilde{b}T)(a)
	=
	\widetilde{b}\,T(a).
	\]
	for all $a\in\mathcal A$. An element $x\in\mathcal A$
	is said to be a $\beta$-\textit{periodic element} of $T$ if there
	exists some $\widetilde{b}\in\beta$ and some
	$d\in\mathbb N$ such that	
	\[
	(\widetilde{b}T)^{nd}(x)=x
	\]
	for all $n\in\mathbb N$.
	If \[
	(\widetilde{b}T)^{nd}(x)=x
	\]
	for all $n\in\mathbb N,$ then $x$ is a just called a \textit{periodic element} of $T.$
	
	\section{Main result}
	We are now ready to present the main theorem of this paper.
	
	\begin{theorem}\label{glavno-haos}
		Under the above notation and assumptions, the following statements are equivalent.
		
		(1) The set of $\beta$-periodic elements of $T$
		is dense in $\mathcal A$.
		
		(2)There exists a dense set $\mathcal{D}$ in $\mathcal A$ such that for every $x\in \mathcal{D}$ and every index $\alpha$ we can
		find some $\widetilde{b}\in\beta$ and $ d \in \mathbb{N}$
		such that the series
		\[
		\sum_{n=1}^{\infty}
		(\widetilde{b}T)^{ndk}(x)\,
		\Phi^{ndk}(p_{\alpha}) \text{ and  }
		\sum_{n=1}^{\infty}
		(\widetilde{b}S)^{ndk}(x)\,
		\Phi^{-ndk}(p_{\alpha})
		\]
		are convergent for each $k\in\mathbb N$, and moreover
		\[
		\lim_{k\to\infty}
		\sum_{n=1}^{\infty}
		(\widetilde{b}T)^{ndk}(x)\,
		\Phi^{ndk}(p_{\alpha})
		=
		\lim_{k\to\infty}
		\sum_{n=1}^{\infty}
		(\widetilde{b}S)^{ndk}(x)\,
		\Phi^{-ndk}(p_{\alpha})
		=
		0.
		\]
	\end{theorem}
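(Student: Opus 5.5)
(1) $\Rightarrow$ (2): I will take $\mathcal D$ to be the set of $\beta$-periodic elements of $T$, which is dense by (1). Fix $x\in\mathcal D$, with $\widetilde b\in\beta$ and $d\in\mathbb N$ such that $(\widetilde bT)^{nd}(x)=x$ for all $n$. Then $(\widetilde bS)^{nd}(x)=x$ as well: here I read $\widetilde bS$ as $(\widetilde bT)^{-1}$, in the sense the paper intends for the backward operator. With that reading, both series in (2) reduce to
\[
\sum_{n=1}^\infty x\,\Phi^{ndk}(p_\alpha)\quad\text{and}\quad \sum_{n=1}^\infty x\,\Phi^{-ndk}(p_\alpha).
\]
Strong aperiodicity gives an $N_\alpha$ such that these converge for every $l\ge N_\alpha$ and tend to $0$ along multiples $lk$. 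The issue is that $d$ is tied to $x$ and need not satisfy $d\geq N_\alpha$. To fix this I replace $d$ by $d':=dN_\alpha$. Periodicity with period $d$ implies periodicity with period $d'$, so the pair $(\widetilde b,d')$ is still admissible. Now $l=d'\ge N_\alpha$, and strong aperiodicity applied with $a=x$ yields convergence for each $k$ and the limits $0$ as $k\to\infty$. The only care needed in this direction is that the choice of $d'$ is allowed to depend on $\alpha$, which (2) permits.

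(2) $\Rightarrow$ (1): Let $\mathcal O$ be open and nonempty, and pick $x\in\mathcal D\cap\mathcal O$. By the right-approximation property applied at $x$, there is $\alpha$ with $xp_\alpha\in\mathcal O$. Take $\widetilde b,d$ from (2) for this $x$ and $\alpha$, and write $R:=\widetilde bT$, so that $R^{-1}=\widetilde bS$. For each $k$ define
\[
y_k:=xp_\alpha+\sum_{n=1}^\infty R^{ndk}(x)\Phi^{ndk}(p_\alpha)+\sum_{n=1}^\infty R^{-ndk}(x)\Phi^{-ndk}(p_\alpha).
\]
By (2), $y_k\to xp_\alpha$, so $y_k\in\mathcal O$ for large $k$. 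It remains to show that $y_k$ is periodic with period $dk$. The key identity is
\[
R^m(a\,c)=R^m(a)\,\Phi^m(c)\qquad (a\in\mathcal A,\ c\in\mathcal A_1).
\]
This holds because $R(a)=\widetilde b\,b\,\Phi(a)$ and $\Phi$ is multiplicative, so $R(ac)=R(a)\Phi(c)$; iterating gives the formula, and likewise for $R^{-1}$ with $\Phi^{-1}$. Applying $R^{dk}$ termwise, which is justified since $R^{dk}$ is bounded (condition (E)) and the series converge, shifts each summand by one index. Writing $x\Phi^0(p_\alpha)=xp_\alpha$, the three pieces form a single bilateral series indexed by $n\in\mathbb Z$, and $R^{dk}$ maps the $n$-th term to the $(n+1)$-th. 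The bilateral series is therefore invariant, giving $R^{dk}y_k=y_k$ and hence $R^{ndk}y_k=y_k$ for all $n$.

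I expect the main obstacle to be making the termwise shift of this bilateral series rigorous, in particular checking that the reindexed series still converges to the same sum. This follows from convergence of both one-sided series together with boundedness of $R^{dk}$. A secondary point is confirming that $\widetilde bS$ is indeed the inverse of $\widetilde bT$, as opposed to $\widetilde b$ multiplying $S$. If it is not, I would reinterpret the backward series in (2) via $(\widetilde bT)^{-1}$, since that inverse is what the construction requires.
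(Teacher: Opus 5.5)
Your proof is correct and follows the paper's argument. In (1)$\Rightarrow$(2) you take the same $\mathcal D$. In (2)$\Rightarrow$(1) you use the same orbit sum $xp_\alpha+\sum_n R^{ndk}(x)\Phi^{ndk}(p_\alpha)+\sum_n R^{-ndk}(x)\Phi^{-ndk}(p_\alpha)$ with $R=\widetilde bT$, and show it is $R^{dk}$-invariant via $R(ac)=R(a)\Phi(c)$, as the paper does. Your small deviations are improvements, not gaps: replacing $d$ by $dN_\alpha$ makes explicit a step the paper leaves implicit, and choosing $x\in\mathcal D\cap\mathcal O$ before applying the right-approximation property lets you drop condition (L), which the paper needs to pass from $up_\alpha$ to $xp_\alpha$.
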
	
	\begin{proof}
		We prove $(1) \Rightarrow (2)$ first. Let $\mathcal{D}$ be the set of all $\beta$-periodic elements of $T$. If $ x \in \mathcal{D} ,$ then, given a fixed index $\alpha,$ we can find some $ d \in \mathbb{N} $ and some $\widetilde{b}\in\beta$  such that	
		\[
		(\widetilde{b}T)^{nd}(x)=x
		\]
		for all $n\in\mathbb{N}$ and
		
		\[
		\lim_{l\to\infty}
		\sum_{n=1}^{\infty}
		x\Phi^{-ndl}(p_{\alpha})
		=
		\lim_{l\to\infty}
		\sum_{n=1}^{\infty}
		x\Phi^{-ndl}(p_{\alpha})
		=
		0,
		\] where the corresponding series are convergent for each $ l \in \mathbb{N}.$ It follows then that
		\[
		\lim_{l\to\infty}
		\sum_{n=1}^{\infty}
		(\widetilde{b}T)^{ndl}(x)\,
		\Phi^{ndl}(p_{\alpha})
		=
		\lim_{l\to\infty}
		\sum_{n=1}^{\infty}
		(\widetilde{b}S)^{ndl}(x)\,
		\Phi^{-ndl}(p_{\alpha})
		=
		0,
		\]
		where the corresponding series are convergent for each $ l \in \mathbb{N}.$
		Since the set of $\beta$-periodic elements of $T$ is dense
		in $\mathcal{A}$ by the assumption, the implication
		$(1) \Rightarrow (2)$ follows.
		
		Now we prove $(2) \Rightarrow (1).$ Notice first that for each $n \in \mathbb{N}$ we have
		\begin{align*}
			(\widetilde{b}T)^d(\widetilde{b}T)^{nd}(x)\,
			\Phi^{nd}(p_{\alpha})
			&=
			\widetilde{b}b\Phi(\widetilde{b}b)\dots
			\Phi^{d-1}(\widetilde{b}b)\,
			\Phi^d
			\Big(
			\widetilde{b}b \dots \Phi^{nd-1}(\widetilde{b}b))\Phi^{nd}(xp_{\alpha})
			\Big) \\
			&=
			\widetilde{b}b\dots
			\Phi^{(n+1)d-1}(\widetilde{b}b)\,
			\Phi^{(n+1)d}(x)\,
			\Phi^{(n+1)d}(p_{\alpha}) \\
			&=
			(\widetilde{b}T)^{(n+1)d}(x)\,
			\Phi^{(n+1)d}(p_{\alpha}).
		\end{align*}
		Similarly,
		\begin{align*}
			(\widetilde{b}T)^d \Big( (\widetilde{b}S)^{nd}(x)\,
			\Phi^{-nd}(p_{\alpha}) \Big)
			&=
			\widetilde{b}b \dots
			\Phi^{d-1}(\widetilde{b}b)\,
			\Phi^{d}
			\Big(
			\Phi^{-1}(\widetilde({b}b)^{-1})
			\cdots
			\Phi^{-nd}((\widetilde{b}b)^{-1}
			x p_{\alpha})\Big)
			\\
			&=
			\Phi^{-1}\big((\widetilde{b}b)^{-1})
			\dots
			\Phi^{-(n-1)d}
			\big((\widetilde{b}b)^{-1}
			x  \big)\,
			\Phi^{-(n-1)d}(p_{\alpha})
			\\
			&=
			(\widetilde{b}S)^{(n-1)d}(x)\,
			\Phi^{-(n-1)d}(p_{\alpha}),
		\end{align*}
		
		for each $n\in\mathbb N$.
		
		Hence
		\[
		x p_{\alpha}+
		\sum_{n=1}^{\infty}
		(\widetilde{b}T)^{nd}(x)\,
		\Phi^{nd}(p_{\alpha})
		+
		\sum_{n=1}^{\infty}
		(\widetilde{b}S)^{nd}(x)\,
		\Phi^{-nd}(p_{\alpha})
		\]
		is a periodic element of $\widetilde{b}T$
		with period $d$, whenever the series
		\[
		\sum_{n=1}^{\infty}
		(\widetilde{b}T)^{nd}(x)\,
		\Phi^{nd}(p_{\alpha}) \text{ and }
		\sum_{n=1}^{\infty}
		(\widetilde{b}S)^{-nd}(x)\,
		\Phi^{-nd}(p_{\alpha})
		\]
		converge.
		Now, let $\mathcal{O}$ be an open non-empty subset of
		$\mathcal A$ and $u \in \mathcal{O}$. Then there exists some $\alpha$ such
		that
		$
		u p_{\alpha} \in \mathcal{O}.
		$
		Also, we can find some $x \in \mathcal{D}$ such that
		$$
		\|x-u\|<\frac{\delta}{2},
		$$
		hence
		$$
		\|xp_{\alpha}-up_{\alpha}\|<\frac{\delta}{2},
		$$
		where $\delta>0$ is such that the $\delta$-neighbourhood
		of $up_{\alpha}$ is contained in $\mathcal{O}$.
		Choose $d \in \mathbb{N}$ and $\widetilde{b}\in \beta$ satisfying the assumptions in 2)
		with respect to $x$ and $\alpha$. Since the series
		$$
		\sum_{n=1}^{\infty}
		(\widetilde{b}T)^{ndk}(x)\,
		\Phi^{nd_{k}}(p_{\alpha}) \text{ and }
		\sum_{n=1}^{\infty}
		(\widetilde{b}S)^{ndk}(x)\,
		\Phi^{-nd_{k}}(p_{\alpha})
		$$
		converge in $\mathcal{A}$ for each $k \in \mathbb{N}$, and since
		\[
		\lim_{k\to\infty}
		\sum_{n=1}^{\infty}
		(\widetilde{b}T)^{ndk}(x)\,
		\Phi^{ndk}(p_{\alpha})
		=
		\lim_{k\to\infty}
		\sum_{n=1}^{\infty}
		(\widetilde{b}S)^{ndk}(x)\,
		\Phi^{-ndk}(p_{\alpha})
		=
		0,
		\]
		we can find some $M\in\mathbb{N}$ such that
		\[
		\left\|
		\sum_{n=1}^{\infty}
		(\widetilde{b}T)^{ndM}(x)\,
		\Phi^{ndM}(p_{\alpha})
		\right\|
		<
		\frac{\delta}{4}
		\]
		and
		\[
		\left\|
		\sum_{n=1}^{\infty}
		(\widetilde{b}S)^{ndM}(x)\,
		\Phi^{-ndM}(p_{\alpha})
		\right\|
		<
		\frac{\delta}{4}.
		\]
		
		Let
		\[
		y
		:=
		xp_{\alpha}
		+
		\sum_{n=1}^{\infty}
		(\widetilde{b}T)^{ndM}(x)\,
		\Phi^{ndM}(p_{\alpha})
		+
		\sum_{n=1}^{\infty}
		(\widetilde{b}S)^{ndM}(x)\,
		\Phi^{-ndM}(p_{\alpha}).
		\]
		Then
		$
		\|y-up_{\alpha}\|<\delta,
		$	
		so $y\in\mathcal{O}$. Moreover, by the above arguments,
		$y$ is a periodic element of $\widetilde{b}T$ with period $dM$.
		This shows that the set of $\beta$-periodic elements of $T$ is dense
		in $\mathcal{A}$.
	\end{proof}
	
	\section{Applications}
	From now on and in the rest of this section, for a separable Hilbert space $H$ we let $B(H)$ be  
	the space of all bounded linear  operators on $H$, and  
	$\mathcal{C}$ be the $C^{*}$-algebra of compact operators on $H$. For an orthonormal basis $\{ e_{j} \}_{j \in \mathbb{Z}}$ for $H$,  
	we let for each $m \in \mathbb{N}$, $P_{m}$ be the orthogonal projection  
	onto $\text{Span} \{ e_{-m}, \dots, e_{m} \}$. For every $ J \in \mathbb{N}$ we let 
	$
	[J]:=\{-J,-J+1,\ldots,J-1,J\}.
	$
	
	Moreover, we will denote by $\Omega$ be a locally compact, non-compact  Hausdorff, space. As usual, $C_{b}(\Omega)$  will denote the space of all bounded continuous functions on $\Omega$ and $C_{0}(\Omega)$ will denote the space of all continuous functions on $\Omega$ vanishing at infinity. Both these spaces will be equipped with supremum norm. In addition, we let $ C_{c}(\Omega) $ be the space of all continuous compactly supported functions on $\Omega .$ For each pair of  compact subsets $K_1$ and $K_2$ of $\Omega$ with $ K_1 \subseteq K_2 $ , we let $u_{(K_1, K_2)} \in C_{0}(\Omega)$  be 
	such that $u_{{(K_1, K_2)}|_{K_1}} = 1$, $\operatorname{supp} u_{(K_1, K_2)} = K_2$ and  $0 \leq u_{(K_1, K_2)} \leq 1$.
	
	\subsection{Generalized weighted shifts on the standard Hilbert module over C*-algebra of compact operators}
	
	\text{ }
	
	In this subsection, we will denote by $l_{2}(\mathcal{C})$  
	the standard (right) Hilbert module over $\mathcal{C}$, see \cite[Example 1.3.5]{MT}. Notice that $l_{2}(\mathcal{C})$ is a non-unital Banach algebra. Indeed, we can define multiplication on  $l_{2}(\mathcal{C})$ as pointwise multiplication, i.e., if $\{x_j\}_{j\in\mathbb{Z}}, \{y_j\}_{j\in\mathbb{Z}} \in l_{2}(\mathcal{C})$,  
	then $$\{x_j \}_{j\in\mathbb{Z}} \cdot \{ y_j\}_{j\in\mathbb{Z}}=\{x_j y_j\}_{j\in\mathbb{Z}} ,$$ for the details we refer to \cite{AOFA}. We put then $\mathcal{A} = l_{2}(\mathcal{C}),$ 
	and $$\mathcal{A}_1 := \{  \{ y_j \}_{j \in \mathbb{Z}} \mid y_j \in B(H) \ \forall j, \, \exists M_y > 0 
	\text{ such that } \| y_j \| \le M_y \ \forall j \} .$$
	The multiplication on $\mathcal{A}_1$ is defined similarly component-wise. We define the norm $ \| \cdot \|_1 $ on $\mathcal{A}_1$ as $ \| \{ y_j \}_{j \in \mathbb{Z}} \|_1 = sup_{j \in \mathbb{Z} } \| y_j \| $ for all $ \{ y_j \}_{j \in \mathbb{Z}} \in \mathcal{A}_1 .$ It is straightforward to check that the condition \textit{(E)} is satisfied in this case.
	
	For each $m, J \in \mathbb{N},$ we let 
	$\tilde{p}_{J,m} \in l_{2}(\mathcal{C})$ be given by
	$$
	(\tilde{p}_{J,m})_{i} =
	\begin{cases}
		P_{m}, & \text{if } -J \le i \le J, \\
		0, & \text{else}.
	\end{cases}
	$$
		It remains to show that
	$\widetilde{p}_{(J,m)}$ is has right approximation  property
	in $\mathcal{A}$. To this end, let
	\[
	b=\{T_{j}\}_{j\in\mathbb Z}\in\mathcal{A}
	\]
	and $\varepsilon>0$. Then we can find some $J\in\mathbb{N}$ such that for all
	\[
	\left\|
	\sum_{j\in\mathbb{Z}\setminus[J]}
	T_j^{*}T_j
	\right\|
	<
	\varepsilon.
	\]
	Since $[J]$ is finite, by \cite[Proposition 2.2.1]{MT} we can find some
	$m\in\mathbb{N}$ such that
	\[
	\|T_jP_{m}\|^{2}
	<
	\frac{\varepsilon}{[J]}
	\]
	for all $j\in[J]$. It is then easy to check that
	\[
	\left\|
	b-b\widetilde{p}_{(J,m)}
	\right\|
	<
	\varepsilon.
	\]			
	Moreover, since \[
	\left\|
	\sum_{j\in [J]}
	P_m T_j^{*}T_j P_m
	\right\|= \left\|P_m
	\bigl(\sum_{j\in [J]}
	 T_j^{*}T_j \bigr) P_m
	\right\|\leq \left\|
	\sum_{j\in [J]}
	 T_j^{*}T_j 
	\right\|\leq \left\|
	\sum_{j\in \mathbb{Z}}
	T_j^{*}T_j 
	\right\|,
	\]
it follows that $\left\{
\widetilde{p}_{(J,m)}
\right\}_{J,m\in\mathbb{N}}.
$ satisfies the condition $(L).$
		
		 Let now $\mathcal{U}$ be the unitary shift operator on $H$ given by $\mathcal{U}(e_j) = e_{j+1} $ for all $j \in \mathbb{Z} .$ We define the map $\Phi : \mathcal{A}_1 \to \mathcal{A}_1$ by
		$$
		\Phi \left( \{ y_j \}_{j \in \mathbb{Z}} \right) 
		= \{ \mathcal{U}^{*} y_{j - 1}  \mathcal{U} \}_{j \in \mathbb{Z}},
		$$ for all $ \{ y_j \}_{j \in \mathbb{Z}} \in \mathcal{A}_1 ,$ 
		(that is the $j$-th coordinate of $\Phi (\{y_j\}_{j\in\mathbb{Z}}) = \mathcal{U}^{*} y_{j - 1}  \mathcal{U}$ for all $j \in \mathbb{Z}$). It is straightforward to check that $\Phi$ is an isometric algebra isomorphism of $\mathcal{A}_1 $ (in the norm $ \| \cdot \|_1 $) such that the restriction of $\Phi$ to $\mathcal{A} $ is an isometric isomorphism of $\mathcal{A} $ in the norm $ \| \cdot \| .$ It remains to show that $\Phi$ is strongly aperiodic with respect to
		$\left\{
		\widetilde{p}_{(J,m)}
		\right\}_{J,m\in\mathbb{N}}.
		$ To prove this, we need first the following auxiliary technical lemma.
		\begin{lemma}\label{pomocno}
			Let $\{ \tilde{P_j}\}_{j\in\mathbb{N}}$ be a sequence of mutually
			orthogonal projections in $B(H)$, and $F_j\in B(H)$
			for all $j\in\mathbb{N}$. Then, if
			
			\[
			\sum_{j=1}^{\infty}
			 \tilde{P_j}F_j \tilde{P_j}
			\]
			
			is convergent in $B(H)$, it holds that
			
			\[
			\left\|
			\sum_{j=1}^{\infty}
			 \tilde{P_j}F_j \tilde{P_j}
			\right\|
			=
			\sup_{j\in\mathbb{N}}
			\left\|
			 \tilde{P_j}F_j \tilde{P_j}
			\right\|.
			\]
		\end{lemma}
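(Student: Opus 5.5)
Set $G := \sum_{j=1}^{\infty}\tilde{P_j}F_j\tilde{P_j}$ (limit in norm), write $G_j := \tilde{P_j}F_j\tilde{P_j}$, and prove the two inequalities separately. The idea is that $G$ is block-diagonal with respect to the orthogonal decomposition $H = \bigl(\bigoplus_j \tilde{P_j}H\bigr)\oplus Q H$, where $Q$ is the projection onto the orthogonal complement of $\bigoplus_j \tilde{P_j}H$, and $G$ vanishes on $QH$.

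\emph{Lower bound.} Since the projections are mutually orthogonal, $\tilde{P_k}\tilde{P_j}=\delta_{kj}\tilde{P_j}$. Multiplication by $\tilde{P_k}$ on both sides is norm-continuous, so it passes through the convergent sum and gives $\tilde{P_k} G \tilde{P_k} = G_k$. Hence $\|G_k\| \le \|\tilde{P_k}\|\,\|G\|\,\|\tilde{P_k}\| \le \|G\|$ for every $k$, and therefore $\sup_k\|G_k\|\le\|G\|$. If some $\tilde{P_k}=0$ the term vanishes and the inequality is trivial.

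\emph{Upper bound.} Fix $\xi\in H$ and set $\xi_j:=\tilde{P_j}\xi$. By orthogonality, $\sum_j\|\xi_j\|^2\le\|\xi\|^2$ (Bessel). For a finite partial sum $G^{(N)}=\sum_{j\le N}G_j$ we have $G^{(N)}\xi=\sum_{j\le N}G_j\xi_j$. The summands $G_j\xi_j$ lie in the mutually orthogonal ranges $\tilde{P_j}H$, so Pythagoras gives
\[
\|G^{(N)}\xi\|^2=\sum_{j\le N}\|G_j\xi_j\|^2\le \Bigl(\sup_j\|G_j\|\Bigr)^2\sum_{j\le N}\|\xi_j\|^2\le \Bigl(\sup_j\|G_j\|\Bigr)^2\|\xi\|^2 .
\]
Thus $\|G^{(N)}\|\le\sup_j\|G_j\|$ for every $N$. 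Letting $N\to\infty$ and using norm convergence $G^{(N)}\to G$ (strong convergence would already suffice) yields $\|G\|\le\sup_j\|G_j\|$.

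There is no real obstacle. The only points needing care are the orthogonality bookkeeping, namely that $G_j\xi = G_j\tilde{P_j}\xi$ and that $G_j\xi$ lies in the range of $\tilde{P_j}$, and the fact that the supremum is finite. Finiteness follows from the lower-bound step, or directly because convergence of the series forces $\|G_j\|\to 0$. Combining the two inequalities gives the claimed equality.
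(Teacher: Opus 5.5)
Your proof is correct and follows essentially the paper's route. The upper bound is the same Pythagoras-plus-Bessel estimate over the mutually orthogonal ranges $\tilde{P_j}H$; you apply it to finite partial sums and then pass to the limit, which is slightly cleaner than the paper's direct manipulation of the infinite sum. The only real difference is the lower bound, where you use the compression $\tilde{P_k}G\tilde{P_k}=\tilde{P_k}F_k\tilde{P_k}$ instead of the paper's step of dropping all but one term in $\|Gh\|^2=\sum_j\|\tilde{P_j}F_j\tilde{P_j}h\|^2$; both are immediate.
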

		
		\begin{proof}
			Notice first that since
			\[
			\sum_{j=1}^{\infty}
			 \tilde{P_j}F_j \tilde{P_j}
			\]
			is convergent by assumption in $B(H)$, then there exists some
			$M\in\mathbb{N}$ such that
			\[
			\left\|
			\sum_{n=m_{1}}^{m_{2}}
			 \tilde{P_n}F_n \tilde{P_n}
			\right\|<1
			\]
			for all $m_{1},m_{2}\in\mathbb{N}$ with
			$M\leq m_{1}\leq m_{2}$. In particular,
			\[
			\| \tilde{P_n}F_n \tilde{P_n}\|<1
			\]
			for all $n\geq M$, so $
			\{\| \tilde{P_n}F_n \tilde{P_n}\|\}_{n\in\mathbb{N}}$
			is bounded.
			Since $P_{j-s}^{\prime}$ are mutually orthogonal, for every
			$h\in H$ and  each $i\in\mathbb{N}$ we have
			\[
			\left\|
			( \sum_{j=1}^{\infty}
			 \tilde{P_j}F_j \tilde{P_j} )h
			\right\|^{2}
			=
			\left\|
			\sum_{j=1}^{\infty}
			( \tilde{P_j}F_j \tilde{P_j}h)
			\right\|^{2}
			=
			\sum_{j=1}^{\infty}
			\left\|
			 \tilde{P_j}F_j \tilde{P_j}h
			\right\|^{2}
			\]
			\[
			\geq
			\left\|
			 \tilde{P_i}F_i \tilde{P_i}h
			\right\|^{2}
			.\]
			Taking supremum over all $h\in H$ with $\|h\|=1$, we obtain
			\[
			\left\|
			\sum_{j=1}^{\infty}
			 \tilde{P_j}F_j \tilde{P_j}
			\right\|
			\geq
			\| \tilde{P_i}F_i \tilde{P_i}\|
			\]
			for each $i\in\mathbb{N},$ hence \[
			\left\|
			\sum_{j=1}^{\infty}
			 \tilde{P_j}F_j \tilde{P_j}
			\right\|
			\geq
			\sup_{j\in\mathbb{N}}
			\| \tilde{P_j}F_j \tilde{P_j}\|.
			\]
			On the other hand, since 
			\[
			\sum_{j=1}^{\infty}
			\| \tilde{P_j}F_j \tilde{P_j}h\|^{2}= \sum_{j=1}^{\infty}
			\| \tilde{P_j}F_j \tilde{P_j}^2 h\|^{2}
			\leq
			\sum_{j=1}^{\infty}
			\| \tilde{P_j}F_j \tilde{P_j}\|^{2}\,
			\| \tilde{P_j}h\|^{2}
			\]
			\[
			\leq
			\left( 
			\sup_{j\in\mathbb{N}}
			\| \tilde{P_j}F_j \tilde{P_j}\|^{2}
			\right)
			\sum_{j=1}^{\infty}
			\| \tilde{P_j}h\|^{2}	\leq
			\left(
			\sup_{j\in\mathbb{N}}
			\| \tilde{P_j}F_j \tilde{P_j}\|
			\right)^{2}
			\|h\|^{2},
			\]
			we deduce that
			\[
			\left\|
			\sum_{j=1}^{\infty}
			 \tilde{P_j}F_j \tilde{P_j}
			\right\|
			\leq
			\sup_{j\in\mathbb{N}}
			\| \tilde{P_j}F_j \tilde{P_j}\|.
			\] \end{proof}

		\begin{remark}
			Lemma \ref{pomocno} remains valid if we instead of the convergence
			in operator norm of
			$\sum_{n=1}^{\infty}
			 \tilde{P_n}F_n \tilde{P_n},
			$
			just assume that
			$\{\| \tilde{P_n}F_n \tilde{P_n}\|\}_{n\in\mathbb{N}}$
			is bounded. In this case, the sum
			\[
			\sum_{n=1}^{\infty}
			 \tilde{P_n}F_n  \tilde{P_n}
			\]
			is strongly convergent and the same proof as above applies.
		\end{remark}
			\begin{lemma}\label{aperiodic}
			The map $\Phi$ is strongly aperiodic with respect to
			$\left\{
			\widetilde{p}_{(J,m)}
			\right\}_{J,m\in\mathbb{N}}.
			$ 
		\end{lemma}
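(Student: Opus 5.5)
The plan is to compute $\Phi^{\pm nl}(\widetilde{p}_{(J,m)})$ explicitly and choose $N_{(J,m)}$ so large that the terms $a\Phi^{\pm nl}(\widetilde{p}_{(J,m)})$, $n\in\mathbb N$, have disjoint coordinate supports and mutually orthogonal "column" projections. Then Lemma \ref{pomocno} turns the norm of any block of the series into a supremum of norms of tails of $a$ in $l_{2}(\mathcal C)$. Throughout I use that the norm on $l_{2}(\mathcal{C})$ is $\|\{x_j\}\|=\|\sum_j x_j^{*}x_j\|^{1/2}$.

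\textbf{Step 1 (formula).} By induction,
$$\Phi^{k}(\{y_j\})_j=\mathcal U^{*k}y_{j-k}\,\mathcal U^{k}\quad\text{for all } k\in\mathbb Z.$$
Since $\mathcal U^{*k}P_m\mathcal U^{k}=:Q_k$ is the orthogonal projection onto $\mathrm{Span}\{e_{-m-k},\dots,e_{m-k}\}$, we get:
\begin{itemize}
\item $\Phi^{k}(\widetilde p_{(J,m)})_j=Q_k$ if $j\in[J]+k$, and $0$ otherwise;
\item for $a=\{a_j\}\in\mathcal A$, $\bigl(a\Phi^{k}(\widetilde p_{(J,m)})\bigr)_j=a_jQ_k$ for $j\in[J]+k$, and $0$ otherwise.
\end{itemize}

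\textbf{Step 2 (choice of $N_{(J,m)}$).} Put $N_{(J,m)}:=2\max(J,m)+1$. For $l\ge N_{(J,m)}$ the index sets $[J]+nl$, $n\in\mathbb Z$, are pairwise disjoint, and so are the index ranges $\{-m-nl,\dots,m-nl\}$. Hence the $Q_{nl}$, $n\in\mathbb Z$, are mutually orthogonal. For integers $1\le n_1\le n_2$ the partial sum $\sum_{n=n_1}^{n_2}a\Phi^{nl}(\widetilde p_{(J,m)})$ therefore has, in each coordinate $j$, at most one nonzero entry. Its squared norm is
\[
\Bigl\|\sum_{n=n_1}^{n_2} Q_{nl}\Bigl(\sum_{j\in[J]+nl}a_j^{*}a_j\Bigr)Q_{nl}\Bigr\|
=\sup_{n_1\le n\le n_2}\Bigl\|Q_{nl}\Bigl(\sum_{j\in[J]+nl}a_j^{*}a_j\Bigr)Q_{nl}\Bigr\|,
\]
by Lemma \ref{pomocno} (a finite sum, so convergence is trivial). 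This is at most $\sup_{n\ge n_1}\bigl\|\sum_{j\in[J]+nl}a_j^{*}a_j\bigr\|\le \bigl\|\sum_{|j|\ge n_1l-J}a_j^{*}a_j\bigr\|$.

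\textbf{Step 3 (convergence and the limit).} Since $a\in l_2(\mathcal C)$, the tails $\|\sum_{|j|\ge R}a_j^{*}a_j\|$ tend to $0$ as $R\to\infty$. The bound of Step 2 then shows that the series is Cauchy, hence convergent in $\mathcal A$. Passing to the limit $n_2\to\infty$, via the Remark after Lemma \ref{pomocno} or simply by continuity of the norm, gives
\[
\Bigl\|\sum_{n=1}^{\infty}a\Phi^{nlk}(\widetilde p_{(J,m)})\Bigr\|^2\le\Bigl\|\sum_{|j|\ge lk-J}a_j^{*}a_j\Bigr\|\xrightarrow[k\to\infty]{}0 .
\]
The negative powers are handled identically: the blocks $[J]-nl$ lie in $\{j:|j|\ge nl-J\}$, and the projections $Q_{-nl}$ are again mutually orthogonal, so the same bound applies.

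\textbf{Main obstacle.} The only delicate point is justifying that the norm of the sum splits as a supremum. One must check that $\sum_j x_j^{*}x_j$ for the partial sum equals $\sum_n Q_{nl}F_nQ_{nl}$ with $F_n=\sum_{j\in[J]+nl}a_j^{*}a_j$. This uses the disjointness of the coordinate blocks (so the cross terms vanish) and $Q_k^{*}=Q_k$. After that, Lemma \ref{pomocno} applies directly. The bound $\|Q F Q\|\le\|F\|$ and monotonicity of the positive partial sums, as in the verification of condition (L), finish the argument.
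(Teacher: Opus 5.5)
Your proof is correct and follows essentially the same route as the paper. Both use the explicit form of $\Phi^{k}$, choose the step large enough that the coordinate blocks $[J]+nl$ are disjoint and the projections $Q_{nl}=\mathcal{U}^{*nl}P_m\mathcal{U}^{nl}$ are mutually orthogonal, and then dominate block sums by tails of $\sum_j a_j^{*}a_j$. The only difference is cosmetic: you group each block into $F_n$ and apply Lemma \ref{pomocno} to get an exact supremum, whereas the paper bounds each summand $Q_{nl}a_j^{*}a_jQ_{nl}$ by $\|a_j\|^{2}Q_{nl}$ and uses $\sum_n Q_{nl}\le I$, which costs a harmless factor $2J+1$.
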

			\begin{proof}
			Let $a=\{F_j\}_{j\in\mathbb{Z}}\in \mathcal{A}$ and
			$J,m \in\mathbb{N}$ be given. Choose any
			$d\in\mathbb{N}$ with $d>2(m+J)$.
			Given $\varepsilon>0$, we can choose some $M\in\mathbb{N}$ such that
			\[
			\left\|
			\sum_{j=M}^{\infty}
			F_j^{*}F_j
			\right\|
			<
			\varepsilon.
			\]
			For every $m_2>m_1>M$, we have
			\[
			\left\|
			\sum_{n=m_1}^{m_2}
			a\Phi^{nd}
			\bigl(
			\widetilde{p}_{(J,m)}
			\bigr)
			\right\|
			=
			\]
			\[
			=
			\left\|
			\sum_{j\in[J]}
			\sum_{n=m_1}^{m_2}
			\mathcal{U}^{*nd}P_m \mathcal{U}^{nd}
			F_{j+nd}F_{j+nd}^{*}
			\mathcal{U}^{*nd}P_m \mathcal{U}^{nd}
			\right\|
			\]
			\[
			\leq
			\left\|
			\sum_{j\in[J]}
			\sum_{n=m_1}^{m_2}
			\mathcal{U}^{*nd}P_m
			\|F_{j+nd}\|^{2}
			P_m \mathcal{U}^{nd}
			\right\|
			\]
			\[
			\leq
			\left\|
			\sum_{j\in[J]}
			\sum_{n=m_1}^{m_2}
			\mathcal{U}^{*nd}P_m
			\left\|
			\sum_{j=M}^{\infty}
			F_j^{*}F_j
			\right\|
			P_m \mathcal{U}^{nd}
			\right\|
			\leq
			\]
			\[
			\leq
			\varepsilon
			\left\|
			\sum_{j\in[J]}
			\sum_{n=m_1}^{m_2}
			\mathcal{U}^{*nd}P_m \mathcal{U}^{nd}
			\right\|\leq
			2J\varepsilon
			\left\|
			\sum_{n=m_1}^{m_2}
			\mathcal{U}^{*nd}P_m \mathcal{U}^{nd}
			\right\|,
			\]
			where in the second inequality we have used that
			$d>2J$, hence
			\[
			j+nd>-J+J(M+1)>M
			\] for all $m_1, m_2 \in \mathbb{N} $ with $m_2>m_1>M+1$. Now, since $d>2m$, we get
			\[
			\{-m-n_{1}d,\ldots,m-n_{1}d\}
			\cap
			\{-m-n_{2}d,\ldots,m-n_{2}d\}
			=
			\emptyset
			\]
			whenever $n_{1},n_{2}\in\mathbb{N}$ with
			$n_{1}\neq n_{2}$. For each $n\in\mathbb{N}$ let $Q_{n}$ be the orthogonal
			projection onto
			\[
			\operatorname{Span}
			\{e_{j}\}_{j=-m-nd}^{\,m-nd}.
			\]
			Then $\{Q_{n}\}_{n\in\mathbb{N}}$ is a sequence of mutually
			orthogonal projections, and
			\[
			\mathcal{U}^{*nd}P_{m}\mathcal{U}^{nd}
			=
			Q_{n}
			\] for all $n\in\mathbb{N}$. Therefore,
			\[ \sum_{n=m_{1}}^{m_{2}}
			\mathcal{U}^{*nd}P_{m}\mathcal{U}^{nd}
			\leq
			I,
			\] so we deduce that
			
			\[
			\left\|
			\sum_{n=m_{1}}^{m_{2}}
			a\Phi^{nd}
			\bigl(
			\widetilde{p}_{(J,m)}
			\bigr)
			\right\|
			\leq
			2J\varepsilon
			\]
			
			for all $m_{2}>m_{1}>M+1$. Thus, the sum
			$\sum_{n=1}^{\infty}
			a\Phi^{nd}
			\bigl(
			\widetilde{p}_{(J,m)}
			\bigr)$
			is convergent in $\mathcal{A}$. Moreover, since
			$
			j+ndk\to\infty
			$
			as $k\to\infty$ for all $j\in[J]$ and $n\in\mathbb{N}$,
			by the same calculations as above we can conclude that
			\[
			\lim_{k\to\infty}
			\left\|
			\sum_{n=1}^{\infty}
			a\Phi^{ndk}
			\bigl(
			\widetilde{p}_{(J,m)}
			\bigr)
			\right\|
			=
			0.
			\]
			Similarly, by using that
			\[
			\lim_{M\to\infty}
			\sum_{j=M}^{\infty}
			F_{-j}^{*}F_{-j}
			=
			0,
			\]
			we can show that the sum
			\[
			\sum_{n=1}^{\infty}
			a\,
			\Phi^{-nd}
			\Bigl(
			\widetilde{p}_{(J,m)}
			\Bigr)
			\]
			is convergent and
			\[
			\lim_{k\to\infty}
			\sum_{n=1}^{\infty}
			a\,
			\Phi^{-ndk}
			\Bigl(
			\widetilde{p}_{(J,m)}
			\Bigr)
			=
			0,
			\]	where the corresponding series is convergent for each $ k \in \mathbb{N}.$	 
			\end{proof} 
		Let $\{ W_{j} \}_{j \in \mathbb{Z}}$ be
		a sequence of operators in $B(H)$ which is uniformly bounded in norm such that each $W_{j}$ has a bounded inverse and such that $\{ W_{j}^{-1} \}_{j \in \mathbb{Z}}$ is also uniformly bounded in the norm. Then let $ b = \{ W_{j}\mathcal{U} \}_{j \in \mathbb{Z}} $ Obviously, $ b_{n} $   is invertible in $\mathcal{A}_1$
	\begin{theorem}\label{period-kompakt}
		Under the above notation and assumptions the following statements are equivalent.
		
		(1) The set of periodic elements of $T$ is dense in $\mathcal{A}.$
		
		(2) There exist a dense subset $  \mathcal{D}$ of $\mathcal{A}$ such that for each $ J, m \in \mathbb{N} $ and each $\{D_j\}_{j\in\mathbb Z} \in \mathcal{D}$ there exists some  $ d>2m $ satisfying that $$\lim_{\widetilde{m}\to\infty}\left(	\sup_{\substack{n\in\mathbb N\\ n\geq \widetilde{m}}}
		\left\{
		\left\|
		W_{r+nd}
		\dots
		W_{r+1}
		D_r
		P_m
		\right\|
		\right\}\right)=0$$ and \[
		\lim_{\widetilde{m}\to\infty}
		\left(
		\sup_{\substack{n\in\mathbb N\\ n\geq \widetilde{m}}}
		\left\|
		W_{r+1-nd}^{-1}
		\dots
		W_{r}^{-1}
		D_rP_m
		\right\|
		\right)
		=
		0
		\]for each $r \in [J].$
	\end{theorem}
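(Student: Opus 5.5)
Our plan is to deduce Theorem~\ref{period-kompakt} from Theorem~\ref{glavno-haos}, applied with $\beta=\{1\}$ (so $\beta$-periodic means periodic) and with the family $\{\widetilde{p}_{(J,m)}\}_{J,m\in\mathbb N}$. Conditions (E) and (L), the right-approximation property, and strong aperiodicity (Lemma~\ref{aperiodic}) have already been checked, so all hypotheses of Theorem~\ref{glavno-haos} hold. It then suffices to show that, for a fixed $x=\{D_j\}\in\mathcal A$ and fixed $J,m$, condition (2) of Theorem~\ref{glavno-haos} is equivalent to the two limit conditions in statement (2) of Theorem~\ref{period-kompakt}.

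First we compute the coordinates. Since $b=\{W_j\mathcal U\}$ and $\Phi$ shifts indices and conjugates by $\mathcal U$, the conjugations by powers of $\mathcal U$ telescope. The $(r+nd)$-th coordinate of $T^{nd}(x)\Phi^{nd}(\widetilde{p}_{(J,m)})$ should then be $W_{r+nd}\cdots W_{r+1}D_r P_m$ composed with a unitary (a power of $\mathcal U$) on the appropriate sides, for $r\in[J]$; all other coordinates vanish. Similarly, $S^{nd}(x)\Phi^{-nd}(\widetilde p_{(J,m)})$ has $(r-nd)$-th coordinate equal to $W^{-1}_{r+1-nd}\cdots W_r^{-1}D_rP_m$ up to unitaries. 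We verify these formulas by induction on $n$, using the formulas for $T^n$ and $S^n$ from Section~1. Because unitaries do not affect norms, the relevant quantities are exactly the norms appearing in the statement.

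Next we estimate the norm of the series in $l_2(\mathcal C)$. Take $d>2J$, which we may assume by passing to a multiple of $d$. Then for distinct $(r,n)$ the coordinates $r+nd$ are distinct, so the series $\sum_n T^{ndk}(x)\Phi^{ndk}(\widetilde p)$ has at most one nonzero operator in each coordinate. Its norm squared is
\[
\Bigl\|\sum_{r\in[J]}\sum_{n} P_m D_r^*W^*\cdots W^*W\cdots W D_rP_m\Bigr\|,
\]
with the unitaries conjugating the summands appropriately. After the unitary conjugations, the ranges of the conjugated $P_m$ for different $n$ are the mutually orthogonal projections $Q_n$ (using $d>2m$), exactly as in the proof of Lemma~\ref{aperiodic}. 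For each fixed $r$, Lemma~\ref{pomocno} and the Remark following it then identify the norm of the $r$-part with $\sup_{n}\|W_{r+ndk}\cdots W_{r+1}D_rP_m\|^2$. Summing over the finitely many $r\in[J]$ gives two-sided bounds: the norm lies between $\max_r$ and $(2J+1)\max_r$ of these suprema.

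It remains to show the two conditions are equivalent. Convergence of the series together with the tail control reduces, via the Cauchy criterion applied to tails $n\ge\widetilde m$, to $\sup_{n\ge \widetilde m}\|\cdots\|\to0$. The limit as $k\to\infty$ also follows, since $\{ndk:n\ge1\}$ lies inside $\{nd:n\ge k\}$. Conversely, condition (2) of Theorem~\ref{glavno-haos} at $k=1$ forces the tail sups to tend to zero. We expect the main obstacle to be the orthogonality bookkeeping: checking that after the unitary conjugations the projections are genuinely mutually orthogonal, so that Lemma~\ref{pomocno} applies. This must be done both for the $T$-part and for the $S$-part, where the unitaries appear on the other side. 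We will handle this by writing each summand explicitly as $\mathcal U^{*s}(\cdot)\mathcal U^{s}$ and reading off its support in the basis $\{e_j\}$.
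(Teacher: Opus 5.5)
Your proposal is correct and follows essentially the same route as the paper. Both apply Theorem~\ref{glavno-haos} with $\beta=\{1\}$, compute the coordinates $W_{r+nd}\cdots W_{r+1}D_rP_m\mathcal{U}^{nd}$ (and $W^{-1}_{r+1-nd}\cdots W^{-1}_rD_rP_m\mathcal{U}^{*nd}$ for the $S$-part), and then use the mutually orthogonal projections $Q_n$ with Lemma~\ref{pomocno} to identify each $r$-part's norm with the corresponding supremum. Passing to a multiple of $d$ so that $d>2J$ (no collisions among the coordinates $r+nd$) and $d>2m$ in the direction $(1)\Rightarrow(2)$ is a detail the paper leaves implicit, and you handle it correctly.
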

	
	\begin{proof}
	$(1)\Rightarrow(2)$ 
		If (1) holds, then by Theorem \ref{glavno-haos} there is a dense subset $\mathcal{D} \subseteq \mathcal{A}$ such that for each $\{D_j\}_{j\in\mathbb Z} \in \mathcal{D}$ and every $ J, m \in \mathbb{N} $ we can find some  $ d>2m $ such that the infinite sum
		\[
		\sum_{n=1}^{\infty}
		T^{nd}
		\bigl(
		\{D_j\}_{j\in\mathbb Z}
		\bigr)
		\,
		\Phi^{nd}
		\Bigl(
		\widetilde{p}_{(J,m)}
		\Bigr)
		\]
		is convergent. Noticing that
		\[
		\Bigl(
		T^{nd}
		\bigl(
		\{D_j\}_{j\in\mathbb Z}
		\bigr)
		\,
		\Phi^{nd}
		\Bigl(
		\widetilde{p}_{(J,m)}
		\Bigr)
		\Bigr)_{r+nd}
		\]
		\[
		=
		W_{r+nd}\dots W_{r+1}\,
		D_r\,
		P_m\,\mathcal{U}^{nd}
		\]
		if $r\in[J]$, and $0$ else, we obtain
		for each $\widetilde{m}\in\mathbb N$ and $r\in[J]$ that
		\[
		\left\|
		\sum_{n=\widetilde{m}}^{\infty}
		T^{nd}
		\bigl(
		\{D_j\}_{j\in\mathbb Z}
		\bigr)
		\,
		\Phi^{nd}
		\Bigl(
		\widetilde{p}_{(J,m)}
		\Bigr)
		\right\|^{2}
		\]
		\[
		\geq
		\left\|
		\sum_{n=\widetilde{m}}^{\infty}
		\mathcal{U}^{*nd}
		P_m
		D_r^{*}
		W_{r+1}^{*}
		\dots
		W_{r+nd}^{*}
		W_{r+nd}
		\dots
		W_{r+1}
		D_r
		P_m
		\mathcal{U}^{nd}
		\right\|.
		\]
		Since for each $ n \in \mathbb{N}$ we have
		\[
		P_m \mathcal{U}^{nd}
		=
		P_m \mathcal{U}^{nd}Q_n,
		\]
		where $Q_n$ is as in the proof of Lemma \ref{aperiodic} and $Q_n^{\prime}$-s are mutually
		orthogonal, by Lemma \ref{pomocno} we get
		\[
		\left\|
		\sum_{n=\widetilde{m}}^{\infty}
		\mathcal{U}^{*nd}
		P_m
		D_r^{*}
		W_{r+1}^{*}
		\dots
		W_{r+nd}^{*}
		W_{r+nd}
		\dots
		W_{r+1}
		D_r
		P_m
		\mathcal{U}^{nd}
		\right\|
		\]
		\[
		=
		\sup_{\substack{n\in\mathbb N\\ n\geq \widetilde{m}}}
		\left\{
		\left\|
		\mathcal{U}^{*nd}
		P_m
		D_r^{*}
		W_{r+1}^{*}
		\dots
		W_{r+nd}^{*}
		W_{r+nd}
		\dots
		W_{r+1}
		D_r
		P_m
		\mathcal{U}^{nd}
		\right\|
		\right\}
		\]
		\[
		=
		\sup_{\substack{n\in\mathbb N\\ n\geq \widetilde{m}}}
		\left\{
		\left\|
		W_{r+nd}
		\dots
		W_{r+1}
		D_r
		P_m
		\right\|^{2}
		\right\}
		\]
		for each $\tilde{m} \in \mathbb{N}$ and $r \in [J].$
		Hence,
		\[
		0
		=
		\lim_{\widetilde{m}\to\infty}
		\left\|
		\sum_{n=\widetilde{m}}^{\infty}
		T^{nd}
		\bigl(
		\{D_j\}_{j\in\mathbb Z}
		\bigr)
		\,
		\Phi^{nd}
		\Bigl(
		\widetilde{p}_{(J,m)}
		\Bigr)
		\right\|
		\geq \lim_{\widetilde{m}\to\infty}	\sup_{\substack{n\in\mathbb N\\ n\geq \widetilde{m}}}
		\left\{
		\left\|
		W_{r+nd}
		\dots
		W_{r+1}
		D_r
		P_m
		\right\|^{2}
		\right\}
		\]
		for each $r\in[J]$. In fact, strictly speaking,
		we should show first that the infinite sum
		\[
		\sum_{n=\widetilde{m}}^{\infty}
		\mathcal{U}^{*nd}
		P_m
		D_r^{*}
		W_{r+1}^{*}
		\dots
		W_{r+nd}^{*}
		W_{r+nd}
		\dots
		W_{r+1}
		D_r
		P_m
		\mathcal{U}^{nd}
		\]
		is convergent in $\mathcal{C}$ for each
		$\widetilde{m}\in\mathbb{N}$.
		To this end, let $\varepsilon>0$ be given and choose
		some $M\geq \widetilde{m}$ such that
		
		\[
		\left\|
		\sum_{n=\widetilde{m}_{1}}^{\widetilde{m}_{2}}
		T^{nd}
		\bigl(
		\{D_j\}_{j\in\mathbb Z}
		\bigr)
		\,
		\Phi^{nd}
		\Bigl(
		\widetilde{p}_{(J,m)}
		\Bigr)
		\right\|
		<
		\varepsilon
		\]
		
		for all
		$\widetilde{m}_{1},\widetilde{m}_{2}\in\mathbb{N}$
		with
		$M\leq \widetilde{m}_{1}<\widetilde{m}_{2}.$
		Now observe that
		\[
		\left\|
		\sum_{n=\widetilde{m}_{1}}^{\widetilde{m}_{2}}
		T^{nd}
		\bigl(
		\{D_j\}_{j\in\mathbb Z}
		\bigr)
		\,
		\Phi^{nd}
		\Bigl(
		\widetilde{p}_{(J,m)}
		\Bigr)
		\right\|^{2}
		=
		\]		
		\[
		=
		\left\|
		\sum_{n=\widetilde{m}_{1}}^{\widetilde{m}_{2}}
		\sum_{i\in[J]}
		\mathcal{U}^{*nd}
		P_m
		D_i^{*}
		W_{i+1}^{*}
		\dots
		W_{i+nd}^{*}
		W_{i+nd}
		\dots
		W_{i+1}
		D_i
		P_m
		\mathcal{U}^{nd}
		\right\|
		\]
		\[
		=
		\left\|
		\sum_{i\in[J]}
		\sum_{n=\widetilde{m}_{1}}^{\widetilde{m}_{2}}
		\mathcal{U}^{*nd}
		P_m
		D_i^{*}
		W_{i+1}^{*}
		\dots
		W_{i+nd}^{*}
		W_{i+nd}
		\dots
		W_{i+1}
		D_i
		P_m
		\mathcal{U}^{nd}
		\right\|
		\]
		\[
		\geq
		\left\|
		\sum_{n=\widetilde{m}_{1}}^{\widetilde{m}_{2}}
		\mathcal{U}^{*nd}
		P_m
		D_r^{*}
		W_{r+1}^{*}
		\dots
		W_{r+nd}^{*}
		W_{r+nd}
		\dots
		W_{r+1}
		D_r
		P_m
		\mathcal{U}^{nd}
		\right\|
		\]
		for each $r\in[J]$. Hence, from the convergence of the sum \[
		\sum_{n=\widetilde{m}}^{\infty}
		T^{nd}
		\bigl(
		\{D_j\}_{j\in\mathbb Z}
		\bigr)
		\,
		\Phi^{nd}
		\Bigl(
		\widetilde{p}_{(J,m)}
		\Bigr),
		\] we can deduce the convergence of the sum 
		\[
		\sum_{n=\widetilde{m}}^{\infty}
		\mathcal{U}^{*nd}
		P_m
		D_r^{*}
		W_{r+1}^{*}
		\dots
		W_{r+nd}^{*}
		W_{r+nd}
		\dots
		W_{r+1}
		D_r
		P_m
		\mathcal{U}^{nd}
		\] for each $r\in[J]$ and $\widetilde{m}\in\mathbb{N}$.
		
		Similarly, from the convergence of the sum
		\[
		\sum_{n=1}^{\infty}
		S^{nd}
		\bigl(
		\{D_j\}_{j\in\mathbb Z}
		\bigr)
		\,
		\Phi^{-ind}
		\Bigl(
		\widetilde{p}_{(J,m)}
		\Bigr),
		\]
		we obtain that
		\[
		\lim_{\widetilde{m}\to\infty}
		\left(
		\sup_{\substack{n\in\mathbb N\\ n\geq \widetilde{m}}}
		\left\|
		W_{r+1-nd}^{-1}
		\dots
		W_{r}^{-1}
		D_rP_m
		\right\|
		\right)
		=
		0
		\]
		for each $r\in[J].$ 
		
		$(2)\Rightarrow(1)$
		Let $\mathcal{D}$ be the dense subset of $\mathcal{A}$
		satisfying the assumptions in 2).
	 Suppose that $\{x_j\}_{j\in\mathbb Z}\in\mathcal D$ and $ J, m \in \mathbb{N} $ are given. Choose $d\in\mathbb N$
		satisfying the assumptions of (2) with respect to $J,m$
		and
		$
		\{D_{-J},\ldots,D_{J}\}
		.$ Then, 
		for every $\varepsilon>0$ we can find some
		$M\in\mathbb N$ such that
		\[
		\sup_{\widetilde{m}_{1}\leq n\leq \widetilde{m}_{2}}
		\left\{
		\left\|
		W_{r+nd}\dots W_{r+1}D_rP_m
		\right\|
		\right\}
		<
		\varepsilon
		\]
		
		for all
		$
		\widetilde{m}_{2}>\widetilde{m}_{1}>M
		$
		and each $r\in[J]$.
		 Further, since $ d>2m ,$ we have
		\[
		\left\|
		\sum_{n=\widetilde{m}_{1}}^{\widetilde{m}_{2}}
		T^{nd}
		\bigl(
		\{x_j\}_{j\in\mathbb Z}
		\bigr)
		\,
		\Phi^{nd}
		\Bigl(
		\widetilde{p}_{( J,m)}
		\Bigr)
		\right\|^{2}
		=
		\]
		\[
		=
		\left\|
		\sum_{r\in[J]}
		\sum_{n=\widetilde{m}_{1}}^{\widetilde{m}_{2}}
		\mathcal{U}^{*nd}
		P_m
		D_r^{*}
		W_{r+1}^{*}
		\dots
		W_{r+nd}^{*}
		W_{r+nd}
		\dots
		W_{r+1}
		D_r
		P_m
		\mathcal{U}^{nd}
		\right\|
		\]
		\[
		\leq
		\sum_{r\in[J]}
		\left\|
		\sum_{n=\widetilde{m}_{1}}^{\widetilde{m}_{2}}
		\mathcal{U}^{*nd}
		P_m
		D_r^{*}
		W_{r+1}^{*}
		\dots
		W_{r+nd}^{*}
		W_{r+nd}
		\dots
		W_{r+1}
		D_r
		P_m
		\mathcal{U}^{nd}
		\right\|
		\]
		\[
		=
		\sum_{r\in[J]}
		\left(
		\sup_{\widetilde{m}_{1}\leq n\leq \widetilde{m}_{2}}
		\left\{
		\left\|
		W_{r+nd}
		\dots
		W_{r+1}
		D_rP_m
		\right\|^{2}
		\right\}
		\right)
		\leq
		2J\varepsilon
		\]
		for all $\widetilde{m}_{1},\widetilde{m}_{2}\in\mathbb N$
		with
		$
		M\leq \widetilde{m}_{1}<\widetilde{m}_{2}.
		$
		Hence the sum
		\[
		\sum_{n=1}^{\infty}
		T^{nd}
		\bigl(
		\{x_j\}_{j\in\mathbb Z}
		\bigr)
		\,
		\Phi^{nd}
		\Bigl(
		\widetilde{p}_{(J,m)}
		\Bigr)
		\]
		
		is convergent in $\mathcal A$.
		Moreover, since obviously
		\[
		\sup_{n\in\mathbb N}
		\left\{
		\left\|
		W_{r+n\widetilde{m}d}
		\dots
		W_{r+1}
		D_rP_m
		\right\|^{2}
		\right\}
		\leq
		\]	
		\[
		\leq
		\sup_{\substack{n\in\mathbb N\\ n\geq \widetilde{m}}}
		\left\{
		\left\|
		W_{r+nd}
		\dots
		W_{r+1}
		D_rP_m
		\right\|
		\right\}
		\]
		for each $\widetilde{m}\in\mathbb N$ and
		$r\in[J]$,
		by similar calculations as above for each $\tilde{m} \in \mathbb{N}$ we obtain
		\[
		\left\|
		\sum_{n=1}^{\infty}
		T^{nd\widetilde{m}}
		\bigl(
		\{x_j\}_{j\in\mathbb Z}
		\bigr)
		\,
		\Phi^{nd\widetilde{m}}
		\Bigl(
		\widetilde{p}_{(J,m)}
		\Bigr)
		\right\|^{2}
		\]
		
		\[
		\leq
		\sum_{r\in[J]}
		\left(
		\sup_{\substack{n\in\mathbb N}}
		\left\{
		\left\|
		W_{r+\tilde{m}d}
		\dots
		W_{r+1}
		D_rP_m
		\right\|^{2}
		\right\}
		\right)
		\longrightarrow 0,
		\qquad
		\widetilde{m}\to\infty .
		\] Similarly, from
		\[
		\lim_{\widetilde{m}\to\infty}
		\left(
		\sup_{\substack{n\in\mathbb N\\ n\geq \widetilde{m}}}
		\left\|
		W_{r-nd+1}^{-1}
		\dots
		W_{r}^{-1}
		D_rP_m
		\right\|
		\right)
		=
		0
		\]
		for each $r\in[J]$, we deduce that the sum
		\[
		\sum_{n=1}^{\infty}
		S^{ndk}
		\bigl(
		\{x_j\}_{j\in\mathbb Z}
		\bigr)
		\,
		\Phi^{-ndk}
		\Bigl(
		\widetilde{p}_{(J,m)}
		\Bigr)
		\]
		is convergent for each $ k \in \mathbb{N} $, and
		\[
		\lim_{\widetilde{m}\to\infty}
		\sum_{n=1}^{\infty}
		S^{nd\widetilde{m}}
		\bigl(
		\{x_j\}_{j\in\mathbb Z}
		\bigr)
		\,
		\Phi^{-nd\widetilde{m}}
		\Bigl(
		\widetilde{p}_{(J,m)}
		\Bigr)
		=
		0.
		\]
		By Theorem \ref{glavno-haos} it follows that the set of periodic elements of $T$ is dense in $\mathcal{A} .$ \end{proof}
		\begin{remark}
			Devaney chaotic generalized weighted shifts on $l_{2}(\mathcal{C})$ were characterized in \cite[Theorem 3.3]{koreja}, however, in \cite[Theorem 3.3]{koreja} they considered the case when $\mathcal{U} =I,$ that is when $\mathcal{U}$ is the identity operator on $H.$ Since in this subsection, $\mathcal{U}$ is the unitary shift operator on $H$ given by $\mathcal{U}(e_j) = e_{j+1} $ for all $j \in \mathbb{Z} ,$ (where $\{ e_{j} \}_{j \in \mathbb{Z}}$ is a fixed orthonormal basis for $H$), in Theorem \ref{period-kompakt} we obtain another conditions that are different from those in \cite[Theorem 3.3]{koreja}. 
		\end{remark}
		
		\subsection{Generalized weighted shifts on the standard Hilbert module over commutative non-unital C*-algebra }
		
		\text{ } 
		
		In this subsection, we let $\alpha$ be homeomorphism of $\Omega$ and  
		$\{ w_{j} \}_{j \in \mathbb{Z}},  \subseteq C_{b}(\Omega)$.  
		We will assume that $w_{j} > 0$ and $w_{j}^{-1} $ is bounded for all $j \in \mathbb{Z},$ and moreover $\| w_{j} \|_{\infty},\| w_{j}^{-1} \|_{\infty}  < M$  
		for all $j \in \mathbb{Z}$
		and some $M > 0.$ (Here $ w_{j}^{-1}: = \frac{1}{w_j} .$)
		
		Now we consider $\mathcal{A} = l_{2}(C_{0}(\Omega))$,  
		and $$\mathcal{A}_1 := \{  \{ y_j \}_{j \in \mathbb{Z}} \mid y_j \in C_{b}(\Omega) \ \forall j, \, \exists M_y > 0 
		\text{ such that } \| y_j \|_{\infty} \le M_y \ \forall j \} .$$ The multiplication and the norm on $ \mathcal{A}_1$ are defined in the same way as in the previous subsection. Once again, it is straightforward to check that the condition \textit{ (E) } is satisfied in this case.

		For $J \in \mathbb{N}$, we let $\tilde{p}_{(K_1, K_2, J)} \in \mathcal{A}$ be given by 
		$$(\tilde{p}_{(K_1, K_2, J)})_{j} =
		\begin{cases}
			u_{(K_1, K_2)}, & \text{if } -J \leq j \leq J, \\[4pt]
			0, & \text{else.}
		\end{cases}$$

		By elementary computations it can be checked that   
		$$\{ \tilde{p}_{(K_1, K_2, J)} \}_{K_1 \subseteq K_2 \subseteq \Omega, \, K_1, K_2 \text{ compact ,} J \in \mathbb{N}}$$ has the right-approximation property and   
		satisfies the condition  $ (L).$\\
		  We let $\Phi : \mathcal{A}_1 \to \mathcal{A}_1$ be given by  
		$$
		\Phi\big( \{ f_{j} \}_{j \in \mathbb{Z}} \big)
		= 
		\{ f_{j-1} \circ \alpha \, \}_{j \in \mathbb{Z}},
		$$
		for all $\{ f_{j} \}_{j \in \mathbb{Z}} \subset \mathcal{A}_1$. Clearly, $ \Phi$ is an isometric algebra isomorphism of $\mathcal{A}_1 $ (in the norm $ \| \cdot \|_1 $) such that the restriction of $\Phi$ to $\mathcal{A} $ is an isometric isomorphism of $\mathcal{A} $ in the norm $ \| \cdot \| .$  
		
		In the proof of the next lemma, we will need first the following remark.
		\begin{remark}\label{disjunktno}
			If $f_{1},\ldots,f_{N}\in C_{0}(\Omega)$
			have mutually disjoint supports, then
			
			\[
			\left\|
			\sum_{j=1}^{N}
			f_j
			\right\|_{\infty}
			=
			\sup_{1\leq j\leq N}
			\lbrace
			\left\|
			f_j
			\right\|_{\infty} \rbrace.
			\]
			This observation easily extends to infinite sums that are convergent in $C_{0}(\Omega) .$
		\end{remark}

		We have the following lemma.
		\begin{lemma}
			The map $\Phi$ is strongly aperiodic with respect to $$\{ \tilde{p}_{(K_1, K_2, J)} \}_{K_1 \subseteq K_2 \subseteq \Omega, \, K_1, K_2 \text{ compact ,} J \in \mathbb{N}}.$$ 
		\end{lemma}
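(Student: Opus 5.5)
The plan is a direct computation of the coordinates of $a\Phi^{\pm nl}(\tilde{p}_{(K_1,K_2,J)})$, followed by a Cauchy-criterion estimate in $l_2(C_0(\Omega))$. It is the same strategy as in the proof of Lemma \ref{aperiodic}, but simpler, because the algebra is commutative and $0\leq u_{(K_1,K_2)}\leq 1$.

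\textbf{Step 1: the coordinates.} Fix compact sets $K_1\subseteq K_2$ and $J\in\mathbb{N}$, and write $u:=u_{(K_1,K_2)}$ and $p:=\tilde{p}_{(K_1,K_2,J)}$. Set $N:=2J+1$, so that the strong aperiodicity condition is checked for every $l\geq N$. By induction on $n$,
\[
\bigl(\Phi^{n}(\{f_j\}_{j\in\mathbb Z})\bigr)_j=f_{j-n}\circ\alpha^{n}, \qquad n\in\mathbb{Z}.
\]
Hence $(\Phi^{nl}(p))_j=u\circ\alpha^{nl}$ when $j-nl\in[J]$, and $(\Phi^{nl}(p))_j=0$ otherwise. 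Let $a=\{a_j\}_{j\in\mathbb Z}\in\mathcal{A}$. Then $a\Phi^{nl}(p)$ is supported on the block of indices $nl+[J]$, where its entries are $a_j\,(u\circ\alpha^{nl})$.

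\textbf{Step 2: Cauchy estimate for the positive series.} Since $l>2J$, the blocks $nl+[J]$ for different $n\in\mathbb{Z}$ are pairwise disjoint. So for $m_2>m_1$ the partial sum $\sum_{n=m_1}^{m_2}a\Phi^{nl}(p)$ has $j$-th coordinate $a_j\,(u\circ\alpha^{nl})$ if $j\in nl+[J]$ with $m_1\le n\le m_2$, and $0$ otherwise. Using the norm $\|x\|^2=\bigl\|\sum_j |x_j|^2\bigr\|_\infty$ on $l_2(C_0(\Omega))$ and $0\le u\circ\alpha^{nl}\le 1$, we get
\[
\Bigl\|\sum_{n=m_1}^{m_2}a\Phi^{nl}(p)\Bigr\|^2
=\Bigl\|\sum_{n=m_1}^{m_2}\sum_{r\in[J]}|a_{r+nl}|^2\,|u\circ\alpha^{nl}|^2\Bigr\|_\infty
\le\Bigl\|\sum_{j\geq m_1l-J}|a_j|^2\Bigr\|_\infty .
\]
The inequality holds pointwise on $\Omega$, because each index $j$ occurs at most once on the left. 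Since $a\in l_2(C_0(\Omega))$, the series $\sum_j|a_j|^2$ converges in $C_0(\Omega)$, so its tails tend to $0$ in sup norm. Thus the partial sums are Cauchy, and $\sum_{n\ge1}a\Phi^{nl}(p)$ converges in $\mathcal{A}$.

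\textbf{Step 3: the limit in $k$.} The same estimate, applied with $lk$ in place of $l$ and $m_1=1$, and passed to the limit $m_2\to\infty$, gives
\[
\Bigl\|\sum_{n=1}^\infty a\Phi^{nlk}(p)\Bigr\|^2\le \Bigl\|\sum_{j\ge lk-J}|a_j|^2\Bigr\|_\infty .
\]
The right-hand side tends to $0$ as $k\to\infty$.

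\textbf{Step 4: the negative direction.} This is symmetric. Now $\Phi^{-nl}(p)$ is supported on the blocks $-nl+[J]$, and the estimate is bounded by the tail $\bigl\|\sum_{j\le -m_1l+J}|a_j|^2\bigr\|_\infty$, which tends to $0$ as $m_1\to\infty$. This gives both the convergence of $\sum_{n\ge1}a\Phi^{-nl}(p)$ and the limit $0$ as $k\to\infty$.

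Two remarks. First, Remark \ref{disjunktno} is not needed: disjointness is only used in the index $j$, never in $\Omega$, and no property of $\alpha$ beyond its being a homeomorphism is used. Second, the step that needs the most care is the pointwise inequality in Step 2. There one must use that the positive functions $|a_j|^2|u\circ\alpha^{nl}|^2$ are dominated termwise by $|a_j|^2$, with no overlaps between blocks. One must also use that the tails of $\sum_j|a_j|^2$ converge uniformly, which is exactly the definition of $l_2(C_0(\Omega))$.
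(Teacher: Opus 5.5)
Your proof is correct, and it is organized differently from the paper's.

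\textbf{The paper's route.} The paper works row by row. For fixed $r$, it chooses $d$ with $\alpha^{nd}(K_2)\cap K_2=\emptyset$ for all $n$, so that the functions $u_{(K_1,K_2)}\circ\alpha^{nd}$ have pairwise disjoint supports in $\Omega$. It then uses Remark \ref{disjunktno} to turn the norm of the row-$r$ sum over $n\ge m$ into a supremum over $n\ge m$. Finally it bounds that supremum by the $l_2$-tail $\bigl\|\sum_{i\ge r+md}|f_i|^2\bigr\|_\infty$. Combining the rows and treating the negative direction is left as ``not hard''.

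\textbf{Your route.} You go from the sum to the same tail in one step. You use disjointness in the index variable: the blocks $nl+[J]$ are pairwise disjoint once $l\ge 2J+1$. Together with the pointwise bound $0\le u_{(K_1,K_2)}\le 1$, this dominates the partial sums directly.

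\textbf{What each buys.} Your argument uses nothing about $\alpha$ beyond its being a homeomorphism, and it gives the explicit threshold $N=2J+1$. You also check every clause of the definition. The paper's choice of $d$ instead presupposes a wandering-type condition on $\alpha$. Since the definition quantifies over all $l\ge N_\alpha$, that condition would be needed for all large $d$. It is not among the standing assumptions of the subsection, and it fails for instance when $\alpha=\mathrm{id}$. In that case the lemma is still true, by your argument.

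The paper's detour through a supremum does produce expressions of the form $\sup_{n\ge m}\sup_{t}$, which match those in the subsequent characterization theorem. Disjointness in $\Omega$ would also absorb overlaps between index blocks. For this lemma, however, neither feature is needed.
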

		\begin{proof}Let $\{f_j\}_{j\in\mathbb{Z}}\in \ell_2(C_0(\Omega))$.
		For each $j\in\mathbb{Z}$ and every $m\in\mathbb{N}$,
		each compact $K_1,K_2$ subset of $\Omega$ with
		$K_1\subseteq K_2$, every $d\in\mathbb{N}$ with
		\[
		\alpha^{nd}(K_2)\cap K_2=\emptyset
		\]
		and all $n\in\mathbb{N}$, we have
		\[
		\left\|
		\sum_{n=m}^{\infty}
		\left|
		f_{j+nd}\circ \alpha^{nd}
		\right|^2
		( u_{(K_1,K_2)}\circ \alpha^{nd} )^{2}
		\right\|_{\infty}
		\]
		
		\[
		\leq
		\sup_{\substack{n\in\mathbb{N}\\ n\geq m}}
		\left(
		\sup_{t\in K_{2}}
		|f_{j+nd}(t)|^2
		\right)
		\]
		
		\[
		\leq
		\sup_{\substack{n\in\mathbb{N}\\ n\geq m}}
		\|f_{j+nd}\|_{\infty}
		\leq 
		\left\|
		\sum_{n=m}^{\infty}
		|f_{j+nd}|^2
		\right\|_{\infty}^{\frac12}
		\]
		
		\[
		\leq 
		\left\|
		\sum_{i=j+md}^{\infty}
		|f_i|^2
		\right\|_{\infty}^{\frac12}
		\longrightarrow 0
		\quad \text{as } m\to\infty,
		\]
		
		because $\{f_j\}_{j\in\mathbb{Z}}\in \ell_2(C_0(\Omega))$. Notice that in the first inequality, we have actually applied Remark \ref{disjunktno}. It follows that the sum $$ \sum_{n=1}^{\infty}
		\left|
		f_{j+nd}\circ \alpha^{nd}
		\right|^2
		( u_{(K_1,K_2)}\circ \alpha^{nd} )^{2}$$ is convergent for all $j\in\mathbb{Z}$. Since for each $ m \in \mathbb{N} $ it holds that \[
		\left\|
		\sum_{n=1}^{\infty}
		\left|
		f_{j+ndm}\circ \alpha^{ndm}
		\right|^2
		( u_{(K_1,K_2)}\circ \alpha^{ndm} )^{2}
		\right\|_{\infty}
		\]
		\[ \leq
		\left\|
		\sum_{n=m}^{\infty}
		\left|
		f_{j+nd}\circ \alpha^{nd}
		\right|^2
		( u_{(K_1,K_2)}\circ \alpha^{nd} )^{2}
		\right\|_{\infty},
		\] we also deduce that $$\lim_{m\to\infty}\left\|
		\sum_{n=1}^{\infty}
		\left|
		f_{j+ndm}\circ \alpha^{ndm}
		\right|^2
		( u_{(K_1,K_2)}\circ \alpha^{nd} )^{2}
		\right\|_{\infty}=0 .$$
		Similarly, the sum
		\[
		\sum_{n=1}^{\infty}
		\left|
		f_{j+nd}\circ \alpha^{-nd}
		\right|^{2}
		\left(
		u_{(K_{1},K_{2})} \circ \alpha^{-nd}
		\right)^{2}
		\]
		
		is convergent for all $j\in\mathbb{Z}$ and 
		\[\lim_{m\to\infty}
		\sum_{n=1}^{\infty}
		\left|
		f_{j-ndm}\circ \alpha^{-ndm}
		\right|^{2}
		\left(
		u_{(K_{1},K_{2})} \circ \alpha^{-ndm}
		\right)^{2}=0.
		\] By all the above, it not is not hard to deduce the statement of the lemma. \end{proof}

		Let $ b = \{ w_{j} \}_{j \in \mathbb{Z}}.$ Then, since $\| w_{j}^{-1} \|_{\infty}  < M$  
		for all $j \in \mathbb{Z},$ it follows that $b$ is invertible in $\mathcal{A}_1.$
		
		\begin{theorem}
			Under the above notation and assumptions, the following statements are equivalent.
			
			(1) The set of periodic elements of $T$ is dense in $\mathcal A.$
			
			(2) For every compact
			$K_{1},K_{2}\subseteq\Omega$ with $K_{1}\subseteq K_{2}$
			and $J\in\mathbb{N},$ we can find some $d \in \mathbb{N} $ such that for each $r\in[J]$ it holds that
			\[
			\lim_{m\to\infty}
			\left(
			\sup_{\substack{n\in\mathbb N\\ n\geq m}}
			\left(
			\sup_{\tilde{t}\in K_{1}}
			\prod_{i=1}^{nd}
			\bigl(
			w_{r+i}\circ\alpha^{-i}
			\bigr)(\tilde{t})
			\right)
			\right)
			=
			0
			\]
			and
			\[
			\lim_{m\to\infty}
			\left(
			\sup_{\substack{n\in\mathbb N\\ n\geq m}}
			\left(
			\sup_{\tilde{t}\in K_{1}}
			\prod_{i=0}^{nd-1}
			\bigl(
			w_{r-i}\circ\alpha^{i}
			\bigr)^{-1}(\tilde{t})
			\right)
			\right)
			=
			0.
			\]\end{theorem}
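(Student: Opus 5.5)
Our plan is to reduce the statement to Theorem \ref{glavno-haos}, applied with $\beta=\{1\}$ and the family $\{\tilde{p}_{(K_1,K_2,J)}\}$. This family has the right-approximation property and satisfies condition (L). The preceding lemma shows that $\Phi$ is strongly aperiodic with respect to it.

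The first step is to compute the coordinates explicitly. By induction, $(T^{n}f)_j=\prod_{i=1}^{n}(w_{j-i+1}\circ\alpha^{\,i-1})\cdots f_{j-n}\circ\alpha^n$ up to the exact indexing of the composition. We make the indexing precise by writing $(T^{nd}f\,\Phi^{nd}(\tilde p))_{r+nd}$ for $r\in[J]$. After the change of variables $t=\alpha^{nd}(\tilde t)$ (the composition by $\alpha^{\pm nd}$ is an isometry of the sup norm), this becomes the function
\[
\prod_{i=1}^{nd}\bigl(w_{r+i}\circ\alpha^{-i}\bigr)\, f_r\, u_{(K_1,K_2)}
\]
composed with $\alpha^{-nd}$. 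All other coordinates vanish. Similarly, the $S^{nd}$ terms live in coordinates $r-nd$ and carry the product $\prod_{i=0}^{nd-1}(w_{r-i}\circ\alpha^{i})^{-1}$. We choose $d$ so that the sets $\alpha^{nd}(K_2)$, $n\in\mathbb N$, are pairwise disjoint and also have coordinates $r+nd$ distinct for distinct $(r,n)$, i.e.\ $d>2J$. Then, by Remark \ref{disjunktno} and the $\ell_2$-norm formula, the norm squared of a tail $\sum_{n\ge m}$ is at most $\sum_{r\in[J]}\sup_{n\ge m}\|\prod(\cdots)f_r u\|_\infty^2$, and at least each single $\sup_{n\geq m}$ term.

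For $(2)\Rightarrow(1)$ we take $\mathcal D=C_c$-sequences with finitely many nonzero coordinates, which is dense. Given $x\in\mathcal D$ and the index $(K_1,K_2,J)$, we first enlarge $K_1$ so that it contains the supports involved. The subtlety is that the product must be controlled on the support of $f_r u_{(K_1,K_2)}$, which is $K_2$, not just $K_1$. We handle this by applying hypothesis (2) to the pair $(K_2,K_3)$ with $K_3\supseteq K_2$, using that $\tilde p_{(K_2,K_3,J)}$ also approximates; alternatively, we relabel the approximating family accordingly. With $\sup_{K_2}$ control, the tail estimate above gives Cauchy convergence. The bound $\sup_{n}\|\cdot\|$ for the $ndk$-series is dominated by $\sup_{n\ge k}$, which tends to $0$, exactly as in the proof of Theorem \ref{period-kompakt}. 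The symmetric argument handles the $S$-series. Theorem \ref{glavno-haos} then yields (1).

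For $(1)\Rightarrow(2)$ we use Theorem \ref{glavno-haos} to get a dense set of $x$ with convergent series. For fixed $K_1,K_2,J$, density lets us pick $x$ with $\|x_r-\mathbf 1\|_{\infty,K_2}$ small on $K_1$ for each $r\in[J]$, say $|x_r|\ge 1/2$ on $K_1$; this is possible because we can approximate $u_{(K_1,K_2)}$ placed in coordinates $[J]$. The lower tail bound then gives $\sup_{n\ge m}\sup_{K_1}\frac12\prod\to 0$, since $u=1$ on $K_1$, which is (2). The main obstacle is the bookkeeping of indices and compositions with $\alpha^{\pm i}$ in the product formula, together with the mismatch between $K_1$ (in the statement) and $K_2$ (the support of $u$) in the direction $(2)\Rightarrow(1)$. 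We handle the latter by the nested-compact relabeling described above.
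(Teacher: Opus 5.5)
Your proposal is essentially the paper's proof. Both reduce to Theorem~\ref{glavno-haos} with $\widetilde b=1$, use the explicit coordinate formula together with Remark~\ref{disjunktno} for the upper and lower tail bounds, pick $x$ close to $u_{(K_1,K_2)}$ in the coordinates $[J]$ for $(1)\Rightarrow(2)$, and apply hypothesis (2) with $K_2$ in the role of $K_1$ for $(2)\Rightarrow(1)$; the paper simply takes all of $\mathcal A$ and uses $\|f_r\|_\infty\le\|f\|$, so your restriction to $C_c$-sequences is unnecessary. The one step to make explicit, which the paper also glosses over, is that $d$ is supplied by (2) or by Theorem~\ref{glavno-haos} rather than chosen freely: replacing $d$ by a multiple $dk_0$ preserves all the required limits, and together with the tacit aperiodicity of $\alpha$ (used in the paper's strong-aperiodicity lemma) this justifies assuming $d>2J$ and $\alpha^{nd}(K_2)\cap K_2=\emptyset$ for all $n$.
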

		\begin{proof} We prove first $
			(1)\Rightarrow (2)
			.$ If (1) holds, then by Theorem \ref{glavno-haos} there exists a dense set $\mathcal{D}$ in $\mathcal A$ such that for every $x\in \mathcal{D}$ and all compact
			$\tilde{K_{1}},\tilde{K_{2}}\subseteq\Omega$ with $\tilde{K_{1}}\subseteq \tilde{K_{2}}$,
			and $\tilde{J}\in\mathbb{N}$ we can find some $ d \in \mathbb{N}$
			such that the series
			\[
			\sum_{n=1}^{\infty}
			T^{ndk}(x)\,
			\Phi^{ndk}
			\bigl(
			\widetilde{p}_{(\tilde{K_{1}},\tilde{K_{2}},\tilde{J})}
			\bigr) \text{ and  }
			\sum_{n=1}^{\infty}
			S^{ndk}(x)\,
			\Phi^{-ndk}
			\bigl(
			\widetilde{p}_{(\tilde{K_{1}},\tilde{K_{2}},\tilde{J})}
			\bigr)
			\]
			are convergent for each $k\in\mathbb N$, and moreover
			\[
			\lim_{k\to\infty}
			\sum_{n=1}^{\infty}
			T^{ndk}(x)\,
			\Phi^{ndk}
			\bigl(
			\widetilde{p}_{(\tilde{K_{1}},\tilde{K_{2}},\tilde{J})}\bigr)
			=
			\lim_{k\to\infty}
			\sum_{n=1}^{\infty}
			S^{ndk}(x)\,
			\Phi^{-ndk}
			\bigl(
			\widetilde{p}_{(\tilde{K_{1}},\tilde{K_{2}},\tilde{J})}
			\bigr)
			=
			0.
			\]
		
		Since $\mathcal{D}$ is dense, given compact
		$K_{1},K_{2}\subseteq\Omega$ with $K_{1}\subseteq K_{2}$,
		and $J\in\mathbb{N}$,we can find some
		$\{f_j\}_{j\in\mathbb{Z}}\in \mathcal{D}$ such that
		\[
		\left|
		f_j-u_{(K_{1},K_{2})}
		\right|
		<
		\frac{1}{2}
		\]
		for all $j\in[J]$. By the assumption there exists some $d\in\mathbb{N}$
		such that
		\[
		\alpha^{nd}(K_{2})\cap K_{2}=\emptyset
		\]
		for all $ n \in \mathbb{N}$ and
		\[
		\sum_{n=1}^{\infty}
		T^{nd}
		\bigl(\{f_j\}_{j\in\mathbb Z}\bigr)\,
		\Phi^{nd}
		\bigl(
		\widetilde{p}_{(K_{1},K_{2},J)}
		\bigr)
		\]
		is convergent in $\ell_{2}(C_{0}(\Omega))$.
		Noticing that for each $ n \in \mathbb{N} $ it holds that
		\[
		\Bigl(
		T^{nd}
		\bigl(\{f_j\}_{j\in\mathbb Z}\bigr)\,
		\Phi^{nd}
		\bigl(
		\widetilde{p}_{(K_{1},K_{2},J)}
		\bigr)
		\Bigr)_{r+nd}
		=
		\prod_{i=1}^{nd}
		\bigl(
		w_{r+i}\circ\alpha^{\,nd-i}
		\bigr)
		\bigl(
		f_{r}\circ\alpha^{nd}
		\bigr)
		\bigl(
		u_{(K_{1},K_{2})}\circ\alpha^{nd}
		\bigr)
		\]
		if
		$
		r\in[J],
		$
		and $0$ else, by some elementary calculations
		it can be deduced that for $m \in \mathbb{N} $ and $r \in [J],$ we have
		
		\[
		\left\|
		\sum_{n=m}^{\infty}
		T^{nd}
		\bigl(\{f_j\}_{j\in\mathbb Z}\bigr)\,
		\Phi^{nd}
		\bigl(
		\widetilde{p}_{(K_{1},K_{2},J)}
		\bigr)
		\right\|^{2}
		\]
		
		\[
		\geq
		\left\|
		\sum_{n=m}^{\infty}
		\prod_{i=1}^{nd}
		\bigl(
		w_{r+i}\circ\alpha^{nd-i}
		\bigr)^{2}
		\,
		\bigl|
		f_{r}\circ\alpha^{nd}
		\bigr|^{2}
		\,
		\bigl(
		u_{(K_{1},K_{2})}\circ\alpha^{nd}
		\bigr)^{2}
		\right\|
		\]
		
		\[
		\geq
		\sup_{\substack{n\in\mathbb N\\ n\geq m}}
		\left(
		\sup_{t\in \alpha^{-nd}(K_{1})}
		\prod_{i=1}^{nd}
		\bigl(
		w_{r+i}\circ\alpha^{nd-i}
		\bigr)^{2}(t)
		\,
		\bigl|
		f_{r}\circ\alpha^{nd}
		\bigr|^{2}(t)
		\right)
		\]
		
		\[
		=
		\sup_{\substack{n\in\mathbb N\\ n\geq m}}
		\left(
		\sup_{\tilde{t}\in K_{1}}
		\prod_{i=1}^{nd}
		\bigl(
		w_{r+i}\circ\alpha^{-i}
		\bigr)^{2}(\tilde{t})
		\,
		| f_{r}(\tilde{t})|^{2}
		\right)
		\]
		
		\[
		\geq
		\frac{1}{4}
		\sup_{\substack{n\in\mathbb N\\ n\geq m}}
		\left(
		\sup_{\tilde{t}\in K_{1}}
		\prod_{i=1}^{nd}
		\bigl(
		w_{r+i}\circ\alpha^{-i}
		\bigr)^{2}(\tilde{t})
		\right),
		\]
		for each $r\in[J]$. Hence, for each $r\in[J]$, it holds that
		\[
		\lim_{m\to\infty}
		\left(
		\sup_{\substack{n\in\mathbb N\\ n\geq m}}
		\left(
		\sup_{\tilde{t}\in K_{1}}
		\prod_{i=1}^{nd}
		\bigl(
		w_{r+i}\circ\alpha^{-i}
		\bigr)(\tilde{t})
		\right)
		\right)
		=
		0.
		\]
		
		Similarly, we can check that for each $r\in[J]$,
		\[
		\lim_{m\to\infty}
		\left(
		\sup_{\substack{n\in\mathbb N\\ n\geq m}}
		\left(
		\sup_{\tilde{t}\in K_{1}}
		\prod_{i=0}^{nd-1}
		\bigl(
		w_{r-i}\circ\alpha^{i}
		\bigr)^{-1}(\tilde{t})
		\right)
		\right)
		=
		0.
		\]
		
Now we prove		$
		(2)\Rightarrow (1)
		.$ To this end, let 
		$\{f_j\}_{j\in\mathbb Z}\in\ell_{2}(C_{0}(\Omega))$ and $\widetilde{p}_{(K_{1},K_{2},J)}$ be given. Assume that $\{f_j\}_{j\in\mathbb Z} $ is not a 0-sequence.
		Choose $d\in\mathbb N$ satisfying the assumptions in (2)
		with respect to $K_{2}$ and $\widetilde{J}$. Given $ \varepsilon >0 $ there exists some $ M \geq 0 $ such that or all $m_{1},m_{2}\geq M$ with $m_{2}>m_{1}$ and each $r\in [ \widetilde{J}]$ we have $$\sup_{\substack{m_{1}\leq n\leq m_{2}}}
		\left(
		\sup_{t\in \alpha^{-nd}(K_{2})}
		\prod_{i=1}^{nd}
		\bigl(
		w_{r+i}\circ\alpha^{nd-i}
		\bigr)^{2}(t)
		\right) \leq \frac{\varepsilon}{	\|\{f_j\}_{j\in\mathbb Z}\|^{2}\widetilde{J}} $$
		Then we get
		\[
		\left\|
		\sum_{n=m_{1}}^{m_{2}}
		T^{nd}
		\bigl(\{f_j\}_{j\in\mathbb Z}\bigr)
		\,
		\Phi^{nd}
		\Bigl(
		\widetilde{p}_{(K_{1},K_{2},\widetilde{J})}
		\Bigr)
		\right\|^{2}
		\]
		\[
		=
		\left\|
		\sum_{r\in[\widetilde{J}]}
		\sum_{n=m_{1}}^{m_{2}}
		\prod_{i=1}^{nd}
		\bigl(
		w_{r+i}\circ\alpha^{nd-i}
		\bigr)^{2}
		\,
		\bigl|
		f_{r}\circ\alpha^{nd}
		\bigr|^{2}
		\,
		\bigl(
		u_{(K_{1},K_{2})}\circ\alpha^{nd}
		\bigr)^{2}
		\right\|_{\infty}
		\]
		
		\[
		\leq
		\sum_{r\in[\widetilde{J}]}
		\left\|
		\sum_{n=m_{1}}^{m_{2}}
		\prod_{i=1}^{nd}
		\bigl(
		w_{r+i}\circ\alpha^{nd-i}
		\bigr)^{2}
		\,
		\bigl|
		f_{r}\circ\alpha^{nd}
		\bigr|^{2}
		\,
		\bigl(
		u_{(K_{1},K_{2})}\circ\alpha^{nd}
		\bigr)^{2}
		\right\|_{\infty}
		\]
		\[
		\leq
		\sum_{r\in[\widetilde{J}]}
		\left(
		\sup_{\substack{m_{1}\leq n\leq m_{2}}}
		\left(
		\sup_{t\in \alpha^{-nd}(K_{2})}
		\prod_{i=1}^{nd}
		\bigl(
		w_{r+i}\circ\alpha^{nd-i}
		\bigr)^{2}(t)
		\,
		\bigl|
		f_{r}\circ\alpha^{nd}
		\bigr|^{2}(t)
		\,
		\bigl(
		u_{(K_{1},K_{2})}\circ\alpha^{nd}
		\bigr)^{2}(t)
		\right)
		\right)
		\]
		
		\[
		\leq
		\|\{f_j\}_{j\in\mathbb Z}\|^{2}
		\sum_{r\in[\widetilde{J}]}
		\sup_{\substack{m_{1}\leq n\leq m_{2}}}
		\left(
		\sup_{t\in \alpha^{-nd}(K_{2})}
		\prod_{i=1}^{nd}
		\bigl(
		w_{r+i}\circ\alpha^{nd-i}
		\bigr)^{2}(t)
		\right)
		\]
		
		\[
		=
		\|\{f_j\}_{j\in\mathbb Z}\|^{2}
		\sum_{r\in[\widetilde{J}]}
		\sup_{\substack{m_{1}\leq n\leq m_{2}}}
		\left(
		\sup_{\tilde{t}\in K_{2}}
		\prod_{i=1}^{nd}
		\bigl(
		w_{r+i}\circ\alpha^{-i}
		\bigr)^{2}(\tilde{t})
		\right)
		<\varepsilon
		\]
		for all $m_{1},m_{2}\geq M$ with $m_{2}>m_{1}$.
		Since $ \varepsilon$ was chosen arbitrary, it follows that the sum
		\[
		\sum_{n=1}^{\infty}
		T^{nd}
		\bigl(\{f_j\}_{j\in\mathbb Z}\bigr)\,
		\Phi^{nd}
		\Bigl(
		\widetilde{p}_{(K_{1},K_{2},\widetilde{J})}
		\Bigr)
		\]
		
		is convergent in $\ell_{2}(C_{0}(\Omega))$, and moreover by the
		analogous calculations as above we can conclude that
		
		\[
		\lim_{k\to\infty}
		\left\|
		\sum_{n=1}^{\infty}
		T^{ndk}
		\bigl(\{f_j\}_{j\in\mathbb Z}\bigr)\,
		\Phi^{ndk}
		\Bigl(
		\widetilde{p}_{(K_{1},K_{2},\widetilde{J})}
		\Bigr)
		\right\|
		=
		0,
		\]
		where the corresponding series is convergent for each $ k \in \mathbb{N}. $
		Similarly, we can deduce from \[
		\lim_{m\to\infty}
		\left(
		\sup_{\substack{n\in\mathbb N\\ n\geq m}}
		\left(
		\sup_{\tilde{t}\in K_{2}}
		\prod_{i=0}^{nd-1}
		\bigl(
		w_{r-i}\circ\alpha^{i}
		\bigr)^{-1}(\tilde{t})
		\right)
		\right)
		=
		0.
		\] that the sum
		\[
		\sum_{n=1}^{\infty}
		S^{nd}
		\bigl(\{f_j\}_{j\in\mathbb Z}\bigr)\,
		\Phi^{-nd}
		\Bigl(
		\widetilde{p}_{(K_{1},K_{2},\widetilde{J})}
		\Bigr)
		\]
		is convergent and \[
		\lim_{k\to\infty}\sum_{n=1}^{\infty}
		S^{ndk}
		\bigl(\{f_j\}_{j\in\mathbb Z}\bigr)\,
		\Phi^{-ndk}
		\Bigl(
		\widetilde{p}_{(K_{1},K_{2},\widetilde{J})}
		\Bigr)=0,
		\] where the corresponding series is convergent for each $ k \in \mathbb{N}. $ By Theorem \ref{glavno-haos}, the implication $
		(2)\Rightarrow (1)
		$  follows. \end{proof}

	\bibliographystyle{amsplain}

\end{document}